\begin{document}
\title[Generalized Tribonacci sequence]{A Three-by-Three matrix representation of a generalized Tribonacci sequence}

\author[G. Cerda-Morales]{Gamaliel Cerda-Morales}  

\address{Gamaliel Cerda-Morales \newline
Instituto de Matem\'atica, Pontificia Universidad Cat\'olica de Valpara\'iso, Blanco Viel 596, Cerro Bar\'on, Valpara\'iso, Chile.}
\email{gamaliel.cerda@usm.cl}

\subjclass[2010]{11B39, 40C05.}
\keywords{Tribonacci Sequence, Tribonacci-Lucas Sequence, Generalized Tribonacci Sequences, Matrix Methods.}

\begin{abstract}
The Tribonacci sequence is a well-known example of third order recurrence sequence, which belongs to a particular class of recursive sequences. In this article, other generalized Tribonacci sequence is introduced and defined by $$H_{n+2}=H_{n+1}+H_{n}+H_{n-1}\ \ (n\geq 1),$$ where $H_{0}=3$, $H_{1}=0$ and $H_{2}=2$. Also $n$-th power of the generating matrix for this generalized Tribonacci sequence is established and some basic properties of this sequence are obtained by matrix methods. There are many elementary formulae relating the various $H_{n}$, most of which, since the sequence is defined inductively, are themselves usually proved by induction.
\end{abstract}

\maketitle
\numberwithin{equation}{section}
\newtheorem{theorem}{Theorem}[section]
\newtheorem{lemma}[theorem]{Lemma}
\newtheorem{proposition}[theorem]{Proposition}
\newtheorem{corollary}[theorem]{Corollary}
\newtheorem{definition}[theorem]{Definition}
\newtheorem*{remark}{Remark}

\section{Preliminaries}
Let $Q=\left[
\begin{array}{ccc}
1& 1& 1 \\ 
1& 0 & 0 \\ 
0 & 1& 0
\end{array}
\right]$ be a companion matrix of the Tribonacci sequence $\{T_{n}\}_{n\geq0}$ defined by the third-order linear recurrence relation
\begin{equation}\label{eq:1}
T_{0}=0,\ T_{1}=T_{2}=1,\ \ T_{n}=T_{n-1}+T_{n-2}+T_{n-3}\ (n\geq3).
\end{equation}
Then, by an inductive argument (\cite{Spi}, \cite{Wa}), the $n$-th power $Q^{n}$ has the matrix form
\begin{equation}\label{eq:2}
Q^{n}=\left[
\begin{array}{ccc}
T_{n+1}& T_{n}+T_{n-1}& T_{n} \\ 
T_{n}&T_{n-1}+T_{n-2}& T_{n-1} \\ 
T_{n-1} & T_{n-2}+T_{n-3}& T_{n-2}
\end{array}
\right]\ (n\geq3).
\end{equation}
This property provides an alternate proof of the Cassini-type formula for $\{T_{n}\}_{n\geq0}$:
\begin{equation}\label{eq:3}
T_{n-1}^{3}+T_{n-2}^{2}T_{n+1}+T_{n-3}T_{n}^{2}-2T_{n-2}T_{n-1}T_{n}-T_{n-3}T_{n-1}T_{n+1}=1.
\end{equation}

It is well known \cite{Fe} that the usual Tribonacci numbers can be expressed using Binet's formula
\begin{equation}\label{eq:4}
T_{n}=\frac{\alpha^{n+1}}{(\alpha-\omega_{1})(\alpha-\omega_{2})}-\frac{\omega_{1}^{n+1}}{(\alpha-\omega_{1})(\omega_{1}-\omega_{2})}+\frac{\omega_{2}^{n+1}}{(\alpha-\omega_{2})(\omega_{1}-\omega_{2})}.
\end{equation}
where $\alpha$, $\omega_{1}$ and $\omega_{2}$ are the roots of the cubic equation $x^{3}-x^{2}-x-1=0$. Furthermore, $\alpha=\frac{1}{3}+A_{T}+B_{T}$, $\omega_{1}=\frac{1}{3}+\epsilon A_{T}+\epsilon^{2} B_{T}$ and $\omega_{2}=\frac{1}{3}+\epsilon^{2}A_{T}+\epsilon B_{T}$, where $$A_{T}=\sqrt[3]{\frac{19}{27}+\sqrt{\frac{11}{27}}},\ B_{T}=\sqrt[3]{\frac{19}{27}-\sqrt{\frac{11}{27}}},$$ and $\epsilon=-\frac{1}{2}+\frac{i\sqrt{3}}{2}$ is a primitive cube root of unity.  

From the Binet's formula Eq. (\ref{eq:4}), using the classic identities
\begin{equation}\label{e0}
\alpha+\omega_{1}+\omega_{2}=1,\ \alpha\omega_{1}+\alpha\omega_{2}+\omega_{1}\omega_{2}=-1,\ \alpha\omega_{1}\omega_{2}=1,
\end{equation}
we have for any integer $n\geq2$:
\begin{align*}
\alpha T_{n}&+(1+\omega_{1}\omega_{2})T_{n-1}+T_{n-2}\\
&=\frac{\alpha^{n-1}(\alpha^{3}+(1+\omega_{1}\omega_{2})\alpha+1)}{(\alpha-\omega_{1})(\alpha-\omega_{2})}-\frac{\omega_{1}^{n-1}(\alpha\omega_{1}^{2}+(1+\omega_{1}\omega_{2})\omega_{1}+1)}{(\alpha-\omega_{1})(\omega_{1}-\omega_{2})}\\
&\ \ +\frac{\omega_{2}^{n-1}(\alpha\omega_{2}^{2}+(1+\omega_{1}\omega_{2})\omega_{2}+1)}{(\alpha-\omega_{2})(\omega_{1}-\omega_{2})}\\
&=\alpha^{n}.
\end{align*}
Then, we obtain 
\begin{equation}\label{eq:5}
\alpha T_{n}+(1+\omega_{1}\omega_{2})T_{n-1}+T_{n-2}=\alpha^{n}\ (n\geq2).
\end{equation}
Multipling Eq. (\ref{eq:5}) by $\alpha$, using $\alpha\omega_{1}\omega_{2}=1$, and if we change $\alpha$, $\omega_{1}$ and $\omega_{2}$ role above process, we obtain the quadratic approximation of $\{T_{n}\}_{n\geq0}$
\begin{equation}\label{eq:6}
\textrm{Quadratic app. of $\{T_{n}\}$}: \left\{
\begin{array}{c }
\alpha^{n+1}=T_{n}\alpha^{2}+(T_{n-1}+T_{n-2})\alpha+T_{n-1},\\
\omega_{1}^{n+1}=T_{n}\omega_{1}^{2}+(T_{n-1}+T_{n-2})\omega_{1}+T_{n-1},\\
\omega_{2}^{n+1}=T_{n}\omega_{2}^{2}+(T_{n-1}+T_{n-2})\omega_{2}+T_{n-1},
\end{array}
\right.
\end{equation}
where $\alpha$, $\omega_{1}$ and $\omega_{2}$ are the roots of the cubic equation $x^{3}-x^{2}-x-1=0$.

In Eq. (\ref{eq:6}), if we change $\alpha$, $\omega_{1}$ and $\omega_{2}$ into the companion matrix $Q$ and change $T_{n-1}$ into the matrix $T_{n-1}I$, where $I$ is the $3\times 3$ identity matrix, then we obtain the matrix form Eq. (\ref{eq:2}) of $Q^{n}$
$$Q^{n+1}=T_{n}Q^{2}+(T_{n-1}+T_{n-2})Q+T_{n-1}I\left(=\left[
\begin{array}{ccc}
T_{n+2}& T_{n+1}+T_{n}& T_{n+1} \\ 
T_{n+1}&T_{n}+T_{n-1}& T_{n} \\ 
T_{n} & T_{n-1}+T_{n-2}& T_{n-1}
\end{array}
\right]\right).$$

The Tribonacci sequence has been generalized in many ways, for example, by changing the recurrence relation while preserving the initial terms, by altering the initial terms but maintaining the recurrence relation, by combining of these two techniques, and so on (for more details see \cite{Ge,Pe,Sha,Wa}).

In \cite{Si}, Silvester shows that a numbers of the Fibonacci sequence can be derived from a matrix representation. In \cite{Ko-Bo}, K\"oken and Bozkurt obtained some important properties of Jacobsthal numbers by matrix methods, using diagonalization of a $2\times 2$ matrix to obtain a Binet's formula for the Jacobsthal numbers. In \cite{De}, Demirt\"urk obtained summation formulae for the Fibonacci and Lucas numbers by matrix methods. In \cite{Go-Dha}, Godase and Dhakne described some properties of $k$-Fibonacci and $k$-lucas numbersby matrix terminology. In \cite{Ca,Ca2}, Catarino and Vasco introduced a $2\times 2$ matrix for the $k$-Pell and $k$-Pell-Lucas sequences. This methods give the motivation of our research.

\section{Main Results}
The main aim of this paper is to study other generalized Tribonacci sequence by matrix methods, which is defined below:
\begin{definition}
The generalized Tribonacci sequence, say $\{H_{n}\}_{n\geq 0}$ defined by
\begin{equation}\label{def}
H_{n+2}=H_{n+1}+H_{n}+H_{n-1},\ H_{0}=3,\ H_{1}=0,\ H_{2}=2.
\end{equation}
\end{definition}

Clearly $x^{3}-x^{2}-x-1=0$ is also the characteristic equation of the sequence (\ref{def}). It produces three roots as $\alpha=\frac{1}{3}+A_{T}+B_{T}$, $\omega_{1}=\frac{1}{3}+\epsilon A_{T}+\epsilon^{2} B_{T}$ and $\omega_{2}=\frac{1}{3}+\epsilon^{2}A_{T}+\epsilon B_{T}$, where $$A_{T}=\sqrt[3]{\frac{19}{27}+\sqrt{\frac{11}{27}}},\ B_{T}=\sqrt[3]{\frac{19}{27}-\sqrt{\frac{11}{27}}},$$ and $\epsilon=-\frac{1}{2}+\frac{i\sqrt{3}}{2}$ is a primitive cube root of unity.

Also the $3\times 3$ matrix called generating matrix for the sequence (\ref{def}) is defined as
\begin{equation}\label{mat}
H=\left[
\begin{array}{ccc}
1& 1& 1 \\ 
1& 0 & 0 \\ 
0 & 1& 0
\end{array}
\right].
\end{equation}

\begin{theorem}[Binet Formulae for the Generalized Tribonacci Sequence]\label{t1}
\begin{equation}\label{e1}
\begin{aligned}
H_{n}&=\alpha^{n}+\omega_{1}^{n}+\omega_{2}^{n}-\frac{\alpha^{n+1}}{(\alpha-\omega_{1})(\alpha-\omega_{2})}+\frac{\omega_{1}^{n+1}}{(\alpha-\omega_{1})(\omega_{1}-\omega_{2})}\\
&\ \ -\frac{\omega_{2}^{n+1}}{(\alpha-\omega_{2})(\omega_{1}-\omega_{2})}=K_{n}-T_{n},\ \ n\geq 0,\\
H_{n}&=3T_{n+1}-3T_{n}-T_{n-1},\ \ n\geq 1,
\end{aligned}
\end{equation}
where $K_{n}$ is the $n$-th Tribonacci-Lucas number.
\end{theorem}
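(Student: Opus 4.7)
The plan rests on the observation that the recurrence $u_{n+2}=u_{n+1}+u_n+u_{n-1}$ has characteristic polynomial $x^3-x^2-x-1$ with three distinct roots $\alpha,\omega_1,\omega_2$, so any sequence satisfying this recurrence is uniquely determined by its first three values. I therefore intend to show that both proposed expressions for $H_n$ satisfy this recurrence and agree with $H_0,H_1,H_2$, after which uniqueness closes the argument.

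For the first identity I would introduce the Tribonacci--Lucas numbers $K_n:=\alpha^n+\omega_1^n+\omega_2^n$; these automatically satisfy the Tribonacci recurrence since each of $\alpha,\omega_1,\omega_2$ is a root of $x^3-x^2-x-1$. Using the Vieta relations (\ref{e0}) together with the classical Newton power-sum identities, a short computation gives $K_0=3$, $K_1=\alpha+\omega_1+\omega_2=1$, and $K_2=K_1^2-2(\alpha\omega_1+\alpha\omega_2+\omega_1\omega_2)=3$. Plugging the Binet formula (\ref{eq:4}) for $T_n$ into $K_n-T_n$ reproduces exactly the long displayed expression in (\ref{e1}), which is therefore a linear combination of $\alpha^n,\omega_1^n,\omega_2^n$ and hence satisfies the Tribonacci recurrence. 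Comparing initial data,
\begin{equation*}
K_0-T_0=3=H_0,\quad K_1-T_1=1-1=0=H_1,\quad K_2-T_2=3-1=2=H_2,
\end{equation*}
uniqueness of the solution forces $H_n=K_n-T_n$ for every $n\geq 0$.

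For the second identity $H_n=3T_{n+1}-3T_n-T_{n-1}$, the right-hand side is a linear combination of shifted copies of $\{T_n\}$, so it inherits the same recurrence. It thus suffices to check agreement at three consecutive indices, say $n=1,2,3$, using $T_0=0$, $T_1=T_2=1$, $T_3=2$, $T_4=4$: the right-hand side yields $0,2,5$, matching $H_1=0$, $H_2=2$, $H_3=H_2+H_1+H_0=5$; an induction on $n$ then extends the equality to all $n\geq 1$. I do not anticipate any substantive obstacle, since both claims reduce to short initial-value checks once the uniqueness principle is invoked; the only subtlety is the index range of the second formula, which is stated for $n\geq 1$ because it involves $T_{n-1}$ and one should avoid assigning a meaning to $T_{-1}$.
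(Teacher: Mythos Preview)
Your proof is correct but follows a genuinely different route from the paper. The paper proceeds constructively: it writes $H_n=A\alpha^n+B\omega_1^n+C\omega_2^n$, solves the $3\times 3$ linear system coming from $H_0,H_1,H_2$ to obtain explicit values of $A,B,C$, and then carries out a chain of algebraic simplifications using the Vieta relations (\ref{e0}) to massage the resulting expression first into $K_n-T_n$ and then into $3T_{n+1}-3T_n-T_{n-1}$. Your argument instead invokes the uniqueness principle for linear recurrences and reduces everything to checking three initial values for each identity, bypassing all of that algebra. The advantage of your approach is brevity and transparency; the advantage of the paper's approach is that it is a genuine derivation rather than a verification---it does not require one to have guessed the identity $H_n=K_n-T_n$ in advance, and it shows explicitly how the Tribonacci--Lucas term emerges from the coefficients $A,B,C$.
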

\begin{proof}
The general form of the generalized Tribonacci sequence may be expressed in the form:
\begin{equation}\label{e3}
H_{n}=A\alpha^{n}+B\omega_{1}^{n}+C\omega_{2}^{n},
\end{equation}
where $A$, $B$ and $C$ are constants that can be determined by the initial conditions. So put the values $n=0$, $n=1$ and $n=2$ in equation (\ref{e3}), we get $$A+B+C=3,\ A\alpha+B\omega_{1}+C\omega_{2}=0\ \textrm{and}\ A\alpha^{2}+B\omega_{1}^{2}+C\omega_{2}^{2}=2.$$
After solving the above system of equations for $A$, $B$ and $C$, we get $$A=\frac{3\omega_{1}\omega_{2}+2}{(\alpha-\omega_{1})(\alpha-\omega_{2})},\ B=-\frac{3\alpha\omega_{2}+2}{(\alpha-\omega_{1})(\omega_{1}-\omega_{2})},\ C=\frac{3\alpha\omega_{1}+2}{(\alpha-\omega_{2})(\omega_{1}-\omega_{2})}.$$ Therefore,
$$H_{n}=\frac{(3\omega_{1}\omega_{2}+2)\alpha^{n}}{(\alpha-\omega_{1})(\alpha-\omega_{2})}-\frac{(3\alpha\omega_{2}+2)\omega_{1}^{n}}{(\alpha-\omega_{1})(\omega_{1}-\omega_{2})}+\frac{(3\alpha\omega_{1}+2)\omega_{2}^{n}}{(\alpha-\omega_{2})(\omega_{1}-\omega_{2})}$$ and by (\ref{e0}), we have
\begin{equation}\label{e4}
\begin{aligned}
H_{n}&=\frac{((2\omega_{1}\omega_{2}+1)+(1+\omega_{1}\omega_{2}))\alpha^{n}}{(\alpha-\omega_{1})(\alpha-\omega_{2})}-\frac{((2\alpha\omega_{2}+1)+(1+\alpha\omega_{2}))\omega_{1}^{n}}{(\alpha-\omega_{1})(\omega_{1}-\omega_{2})}\\
&\ \ +\frac{((2\alpha\omega_{1}+1)+(1+\alpha\omega_{1}))\omega_{2}^{n}}{(\alpha-\omega_{2})(\omega_{1}-\omega_{2})}\\
&=\frac{((2\omega_{1}\omega_{2}+1)+(\alpha^{2}-\alpha))\alpha^{n}}{(\alpha-\omega_{1})(\alpha-\omega_{2})}-\frac{((2\alpha\omega_{2}+1)+(\omega_{1}^{2}-\omega_{1}))\omega_{1}^{n}}{(\alpha-\omega_{1})(\omega_{1}-\omega_{2})}\\
&\ \ +\frac{((2\alpha\omega_{1}+1)+(\omega_{2}^{2}-\omega_{2}))\omega_{2}^{n}}{(\alpha-\omega_{2})(\omega_{1}-\omega_{2})}\\
&=\frac{((\alpha^{2}+2\omega_{1}\omega_{2}+1)-\alpha)\alpha^{n}}{(\alpha-\omega_{1})(\alpha-\omega_{2})}-\frac{((\omega_{1}^{2}+2\alpha\omega_{2}+1)-\omega_{1})\omega_{1}^{n}}{(\alpha-\omega_{1})(\omega_{1}-\omega_{2})}\\
&\ \ +\frac{((\omega_{2}^{2}+2\alpha\omega_{1}+1)-\omega_{2})\omega_{2}^{n}}{(\alpha-\omega_{2})(\omega_{1}-\omega_{2})}\\
&=\frac{((\alpha-\omega_{1})(\alpha-\omega_{2})-\alpha)\alpha^{n}}{(\alpha-\omega_{1})(\alpha-\omega_{2})}-\frac{((\alpha-\omega_{1})(\omega_{1}-\omega_{2})-\omega_{1})\omega_{1}^{n}}{(\alpha-\omega_{1})(\omega_{1}-\omega_{2})}\\
&\ \ +\frac{((\alpha-\omega_{2})(\omega_{1}-\omega_{2})-\omega_{2})\omega_{2}^{n}}{(\alpha-\omega_{2})(\omega_{1}-\omega_{2})}\\
&=\alpha^{n}+\omega_{1}^{n}+\omega_{2}^{n}-\frac{\alpha^{n+1}}{(\alpha-\omega_{1})(\alpha-\omega_{2})}+\frac{\omega_{1}^{n+1}}{(\alpha-\omega_{1})(\omega_{1}-\omega_{2})}\\
&\ \ -\frac{\omega_{2}^{n+1}}{(\alpha-\omega_{2})(\omega_{1}-\omega_{2})}=K_{n}-T_{n}.
\end{aligned}
\end{equation}
This proves the first part of the theorem (\ref{e1}).

Now if we consider equations (\ref{e0}) and (\ref{e4}), we get
\begin{align*}
H_{n}&=\frac{(3\omega_{1}\omega_{2}+2)\alpha^{n}}{(\alpha-\omega_{1})(\alpha-\omega_{2})}-\frac{(3\alpha\omega_{2}+2)\omega_{1}^{n}}{(\alpha-\omega_{1})(\omega_{1}-\omega_{2})}+\frac{(3\alpha\omega_{1}+2)\omega_{2}^{n}}{(\alpha-\omega_{2})(\omega_{1}-\omega_{2})}\\
&=\frac{(3(\omega_{1}\omega_{2}+1)-1)\alpha^{n}}{(\alpha-\omega_{1})(\alpha-\omega_{2})}-\frac{(3(\alpha\omega_{2}+1)-1)\omega_{1}^{n}}{(\alpha-\omega_{1})(\omega_{1}-\omega_{2})}+\frac{(3(\alpha\omega_{1}+1)-1)\omega_{2}^{n}}{(\alpha-\omega_{2})(\omega_{1}-\omega_{2})}\\
&=\frac{(3(\alpha^{2}-\alpha)-1)\alpha^{n}}{(\alpha-\omega_{1})(\alpha-\omega_{2})}-\frac{(3(\omega_{1}^{2}-\omega_{1})-1)\omega_{1}^{n}}{(\alpha-\omega_{1})(\omega_{1}-\omega_{2})}+\frac{(3(\omega_{2}^{2}-\omega_{2})-1)\omega_{2}^{n}}{(\alpha-\omega_{2})(\omega_{1}-\omega_{2})}\\
&=\frac{(3\alpha^{2}-3\alpha-1)\alpha^{n}}{(\alpha-\omega_{1})(\alpha-\omega_{2})}-\frac{(3\omega_{1}^{2}-3\omega_{1}-1)\omega_{1}^{n}}{(\alpha-\omega_{1})(\omega_{1}-\omega_{2})}+\frac{(3\omega_{2}^{2}-3\omega_{2}-1)\omega_{2}^{n}}{(\alpha-\omega_{2})(\omega_{1}-\omega_{2})}\\
&=3T_{n+1}-3T_{n}-T_{n-1}.
\end{align*}
This proves the second part of the theorem (\ref{e1}).
\end{proof}

\begin{theorem}\label{t2}
For $n\in \mathbb{N}$, we have
\begin{equation}\label{e5}
9H_{n+2}-2H_{n+1}+35H_{n}=41K_{n},
\end{equation}
where $K_{n}$ is the $n$-th Tribonacci-Lucas number.
\end{theorem}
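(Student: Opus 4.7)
The plan is to exploit the linearity of Binet's formula established in Theorem~\ref{t1}, so that both sides of (\ref{e5}) can be seen to satisfy the same third-order linear recurrence, at which point the identity reduces to the verification of three initial cases. Specifically, by Theorem~\ref{t1}, $\{H_n\}$ is a linear combination of the powers $\alpha^n, \omega_1^n, \omega_2^n$, and the Tribonacci--Lucas sequence $\{K_n\}=\{\alpha^n+\omega_1^n+\omega_2^n\}$ is as well; hence both satisfy the characteristic recurrence $X_{n+3}=X_{n+2}+X_{n+1}+X_n$ inherited from $x^3-x^2-x-1=0$. Consequently, the auxiliary sequence
\[
D_n := 9H_{n+2} - 2H_{n+1} + 35H_n - 41K_n
\]
also satisfies this recurrence, and (\ref{e5}) is equivalent to the assertion $D_0=D_1=D_2=0$. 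The remainder of the proof is then three direct numerical verifications, using $H_0=3$, $H_1=0$, $H_2=2$ together with the derived values $H_3=5$, $H_4=7$, and the standard initial Tribonacci--Lucas values $K_0=3$, $K_1=1$, $K_2=3$.

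Alternatively, one may substitute the Binet expansion $H_n=A\alpha^n+B\omega_1^n+C\omega_2^n$ with the constants $A,B,C$ computed in the proof of Theorem~\ref{t1}, together with $K_n=\alpha^n+\omega_1^n+\omega_2^n$, directly into (\ref{e5}) and match coefficients of $\alpha^n$, $\omega_1^n$, $\omega_2^n$. This reduces the claim to the three symmetric algebraic identities
\[
A(9\alpha^2 - 2\alpha + 35) = B(9\omega_1^2 - 2\omega_1 + 35) = C(9\omega_2^2 - 2\omega_2 + 35) = 41,
\]
each of which can be verified using the elementary symmetric identities from (\ref{e0}), namely $\alpha+\omega_1+\omega_2=1$, $\alpha\omega_1+\alpha\omega_2+\omega_1\omega_2=-1$, $\alpha\omega_1\omega_2=1$, together with the defining relation $\alpha^3=\alpha^2+\alpha+1$ (and its cyclic analogues), which lets one reduce all cubic powers of a root to quadratic expressions.

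The principal difficulty is really one of bookkeeping rather than of substance: the identity is a rigid consequence of linear algebra over the splitting field of $x^3-x^2-x-1$, so no genuine obstacle arises. The recurrence-plus-initial-data route is substantially shorter and is what I would actually write out in full, while the Binet route more transparently explains \emph{why} the specific coefficients $9,\,-2,\,35,\,41$ occur in (\ref{e5}).
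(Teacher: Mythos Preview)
Your proposal is correct, and both of your routes work cleanly; the recurrence-plus-initial-values argument in particular is rigorous once one checks $D_0=D_1=D_2=0$, which you have. However, the paper's own proof takes a rather different path: it starts from the identity $K_n=H_n+T_n$ of Theorem~\ref{t1}, writes $41K_n=35H_n+6H_n+41T_n$, substitutes $6H_n=6(3T_{n+1}-3T_n-T_{n-1})$ from the second part of Theorem~\ref{t1}, and then repeatedly applies the Tribonacci recurrence $T_{m}=T_{m+3}-T_{m+2}-T_{m+1}$ to shuffle the $T$-terms until they can be regrouped into $-2(3T_{n+2}-3T_{n+1}-T_n)+9(3T_{n+3}-3T_{n+2}-T_{n+1})=-2H_{n+1}+9H_{n+2}$. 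So the paper's argument is a direct symbolic manipulation inside the $T$-sequence, leaning on both conclusions of Theorem~\ref{t1}, while yours is a structural argument using that both sides lie in the same three-dimensional solution space of the recurrence. Your approach is shorter, requires no clever regrouping, and generalizes immediately to any linear identity among shifts of $H$ and $K$; the paper's approach, by contrast, makes the dependence on the formula $H_n=3T_{n+1}-3T_n-T_{n-1}$ explicit and in effect derives the coefficients $9,-2,35$ rather than merely verifying them.
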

\begin{proof}
To prove this we will use equations (\ref{def}), (\ref{e1}), (\ref{eq:1}) and (\ref{e0}):
\begin{align*}
41K_{n}&=41(H_{n}+T_{n})\\
&=(35H_{n}+6H_{n})+41T_{n}\\
&=35H_{n}+6(3T_{n+1}-3T_{n}-T_{n-1})+41T_{n}\\
&=35H_{n}+18T_{n+1}+23T_{n}-6T_{n-1}\\
&=35H_{n}+18T_{n+1}+23T_{n}-6(T_{n+2}-T_{n+1}-T_{n})\\
&=35H_{n}-6T_{n+2}+24T_{n+1}+29T_{n}\\
&=35H_{n}-6T_{n+2}+24T_{n+1}+27(T_{n+3}-T_{n+2}-T_{n+1})+2T_{n}\\
&=35H_{n}+27T_{n+3}-33T_{n+2}-3T_{n+1}+2T_{n}\\
&=35H_{n}-2(3T_{n+2}-3T_{n+1}-T_{n})+9(3T_{n+3}-3T_{n+2}-T_{n+1})\\
&=35H_{n}-2H_{n+1}+9H_{n+2},
\end{align*}
as required.
\end{proof}

\begin{theorem}[The $n$-th Power of the Generating Matrix]\label{t3}
For $n\in \mathbb{N}$, we have
\begin{equation}\label{e6}
H^{n}=\frac{1}{41}\left[
\begin{array}{ccc}
\begin{array}{c}10H_{n+1}+16H_{n}\\+7H_{n-1}\end{array}& \begin{array}{c}10H_{n}+26H_{n-1}\\
+23H_{n-2}+7H_{n-3}\end{array}& \begin{array}{c}10H_{n}+16H_{n-1}\\+7H_{n-2} \end{array}\\ 
\begin{array}{c}7H_{n+1}+3H_{n}\\+9H_{n-1}\end{array}& \begin{array}{c}7H_{n}+10H_{n-1}\\
+12H_{n-2}+9H_{n-3}\end{array}& \begin{array}{c}7H_{n}+3H_{n-1}\\+9H_{n-2} \end{array}\\ 
\begin{array}{c}9H_{n+1}-2H_{n}\\-6H_{n-1}\end{array}& \begin{array}{c}9H_{n}+7H_{n-1}\\
-8H_{n-2}-6H_{n-3}\end{array}& \begin{array}{c}9H_{n}-2H_{n-1}\\-6H_{n-2} \end{array}
\end{array}
\right],
\end{equation}
where $H_{-1}=-1$, $H_{-2}=-2$.
\end{theorem}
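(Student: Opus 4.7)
The plan is to reduce the assertion to the already-established matrix identity (\ref{eq:2}), which, since the generating matrix $H$ in (\ref{mat}) is identical to the Tribonacci companion matrix $Q$, already expresses $H^{n} = Q^{n}$ through the Tribonacci numbers $T_{n+1}, T_{n}, T_{n-1}, T_{n-2}, T_{n-3}$. It therefore suffices to rewrite each of these $T_{k}$ as a linear combination of three consecutive $H_{j}$'s and read the result off entry-by-entry.

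The first step is to invert the $T$--$H$ relation of Theorem~\ref{t1}. Combining $H_{n} = K_{n} - T_{n}$ with the identity $9H_{n+2} - 2H_{n+1} + 35H_{n} = 41K_{n}$ from Theorem~\ref{t2} eliminates $K_{n}$ and produces the master formula
\begin{equation*}
41\,T_{n} \;=\; 9H_{n+2} - 2H_{n+1} - 6H_{n},
\end{equation*}
which, after one application of the recurrence (\ref{def}), takes the symmetric three-term shape
\begin{equation*}
41\,T_{n} \;=\; 7H_{n+1} + 3H_{n} + 9H_{n-1}.
\end{equation*}
This is exactly the pattern in row $2$ of (\ref{e6}). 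Shifting $n\mapsto n+1$ and trading $H_{n+2}$ for $H_{n+1}+H_{n}+H_{n-1}$ yields the coefficient triple $(10,16,7)$ of row $1$, while shifting $n\mapsto n-1$ and using the backward relation $H_{n-2}=H_{n+1}-H_{n}-H_{n-1}$ (from (\ref{def})) yields $(9,-2,-6)$ of row $3$. Substituting the three resulting expressions into (\ref{eq:2}) immediately settles columns $1$ and $3$ of (\ref{e6}), since each of those entries is a single $T_{k}$.

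The main obstacle is the middle column, whose entries are sums $T_{k}+T_{k-1}$ and expand to six-term $H$-combinations once two shifted copies of the master formula are added; these must be collapsed, by one further application of (\ref{def}) (and of the backward relation $H_{k-3}=H_{k}-H_{k-1}-H_{k-2}$), into the four-term canonical form with indices $n, n-1, n-2, n-3$ displayed in (\ref{e6}). This reduction is mechanical but is the only place in the argument where genuine bookkeeping is required. An alternative route, more in keeping with the remark in the abstract, is induction on $n$: the base case $n=1$ is a small numerical check (using the backward extension $H_{-3}=6$ dictated by (\ref{def})), and the inductive step $H^{n+1}=H^{n}H$ reduces each entry to a linear identity in the $H$-sequence; this avoids invoking Theorems~\ref{t1}--\ref{t2} but demands nine separate entry-wise verifications in place of the three above.
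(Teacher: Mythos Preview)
Your argument is correct and follows a genuinely different route from the paper. The paper proves the formula by direct induction on $n$: it checks $n=1$ numerically and then multiplies the assumed expression for $H^{n}$ on the right by $H$, reducing each of the nine resulting entries via the recurrence~(\ref{def}). This is exactly the ``alternative route'' you sketch in your final paragraph.

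Your primary approach instead exploits the coincidence $H=Q$ to import the known Tribonacci expansion~(\ref{eq:2}) of $Q^{n}$, and then inverts the $H$--$T$ relationship. The key step is your derivation of $41\,T_{n}=7H_{n+1}+3H_{n}+9H_{n-1}$ from Theorems~\ref{t1} and~\ref{t2}; this identity (and its shifts) \emph{explains} the otherwise mysterious coefficient triples $(10,16,7)$, $(7,3,9)$, $(9,-2,-6)$ appearing in~(\ref{e6}), whereas in the paper's inductive proof those coefficients are simply posited and verified. Your route thus has more explanatory content and makes clear that~(\ref{e6}) is nothing but~(\ref{eq:2}) in disguise. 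The price is a dependence on the earlier Theorems~\ref{t1}--\ref{t2} and on formula~(\ref{eq:2}), which is stated only for $n\ge 3$; the small cases $n=1,2$ still require a direct check (or a remark that~(\ref{eq:2}) extends to all $n\ge 1$ once $T_{-1}=T_{-2}=0$ are adopted). The paper's induction, by contrast, is self-contained but demands nine entry-wise bookkeeping verifications rather than the three structural ones you isolate.
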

\begin{proof}
Here we shall use induction on $n$. Indeed (\ref{e6}) is true for $n=1$. Now, we suppose that the (\ref{e6}) is true for $n$. Let us show that the result is true for $n+1$, then
\begin{align*}
H^{n+1}&=\frac{1}{41}\left[
\begin{array}{ccc}
\begin{array}{c}10H_{n+1}+16H_{n}\\+7H_{n-1}\end{array}& \begin{array}{c}10H_{n}+26H_{n-1}\\
+23H_{n-2}+7H_{n-3}\end{array}& \begin{array}{c}10H_{n}+16H_{n-1}\\+7H_{n-2} \end{array}\\ 
\begin{array}{c}7H_{n+1}+3H_{n}\\+9H_{n-1}\end{array}& \begin{array}{c}7H_{n}+10H_{n-1}\\
+12H_{n-2}+9H_{n-3}\end{array}& \begin{array}{c}7H_{n}+3H_{n-1}\\+9H_{n-2} \end{array}\\ 
\begin{array}{c}9H_{n+1}-2H_{n}\\-6H_{n-1}\end{array}& \begin{array}{c}9H_{n}+7H_{n-1}\\
-8H_{n-2}-6H_{n-3}\end{array}& \begin{array}{c}9H_{n}-2H_{n-1}\\-6H_{n-2} \end{array}
\end{array}
\right] \left[
\begin{array}{ccc}
1& 1& 1 \\ 
1& 0 & 0 \\ 
0 & 1& 0
\end{array}
\right]\\
&=\frac{1}{41}\left[
\begin{array}{ccc} 
\begin{array}{c}10H_{n+1}+26H_{n}+33H_{n-1}\\+23H_{n-2}+7H_{n-3}\end{array}& \begin{array}{c}10H_{n+1}+26H_{n}\\
+23H_{n-1}+7H_{n-2}\end{array}& \begin{array}{c}10H_{n+1}+16H_{n}\\+7H_{n-1} \end{array}\\ 
\begin{array}{c}7H_{n+1}+10H_{n}+19H_{n-1}\\+12H_{n-2}+9H_{n-3}\end{array}& \begin{array}{c}7H_{n+1}+10H_{n}\\
+12H_{n-1}+9H_{n-2}\end{array}& \begin{array}{c}7H_{n+1}+3H_{n}\\+9H_{n-1} \end{array}\\ 
\begin{array}{c}9H_{n+1}+7H_{n}+H_{n-1}\\-8H_{n-2}-6H_{n-3}\end{array}& \begin{array}{c}9H_{n+1}+7H_{n}\\
-8H_{n-1}-6H_{n-2}\end{array}& \begin{array}{c}9H_{n+1}-2H_{n}\\-6H_{n-1} \end{array}
\end{array}
\right]\\
&=\frac{1}{41}\left[
\begin{array}{ccc} 
\begin{array}{c}10H_{n+2}+16H_{n+1}\\+7H_{n}\end{array}& \begin{array}{c}10H_{n+1}+26H_{n}\\
+23H_{n-1}+7H_{n-2}\end{array}& \begin{array}{c}10H_{n+1}+16H_{n}\\+7H_{n-1} \end{array}\\ 
\begin{array}{c}7H_{n+2}+3H_{n+1}\\+9H_{n}\end{array} & \begin{array}{c}7H_{n+1}+10H_{n}\\
+12H_{n-1}+9H_{n-2}\end{array}& \begin{array}{c}7H_{n+1}+3H_{n}\\+9H_{n-1} \end{array}\\ 
\begin{array}{c}9H_{n+2}-2H_{n+1}\\-6H_{n}\end{array} & \begin{array}{c}9H_{n+1}+7H_{n}\\
-8H_{n-1}-6H_{n-2}\end{array}& \begin{array}{c}9H_{n+1}-2H_{n}\\-6H_{n-1} \end{array}
\end{array}
\right],
\end{align*}
as required.
\end{proof}

\begin{theorem}[Cubic Identity]\label{t4}
For $n\in \mathbb{N}$ with $n\geq 3$, we obtain
\begin{equation}\label{e7}
H_{n-1}^{3}+H_{n-2}^{2}H_{n+1}+H_{n-3}H_{n}^{2}-2H_{n-2}H_{n-1}H_{n}-H_{n-3}H_{n-1}H_{n+1}=41.
\end{equation}
\end{theorem}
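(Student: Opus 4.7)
The plan is to imitate the derivation of the Tribonacci Cassini-type identity (\ref{eq:3}) from the matrix expression (\ref{eq:2}) of $Q^{n}$. Concretely, I would introduce the $H$-analogue of that matrix,
\[
R_{n}=\begin{pmatrix}
H_{n+1} & H_{n}+H_{n-1} & H_{n}\\
H_{n} & H_{n-1}+H_{n-2} & H_{n-1}\\
H_{n-1} & H_{n-2}+H_{n-3} & H_{n-2}
\end{pmatrix},\qquad n\geq 3.
\]
Each column of $R_{n}$ is a triple of consecutive terms (or sums of consecutive terms) of a sequence satisfying the Tribonacci-type recurrence (\ref{def}), so the generating matrix $H$ in (\ref{mat}) acts on every column by a one-step index shift; at the matrix level this reads $R_{n+1}=HR_{n}$. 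Taking determinants and using $\det H=1$ then shows that $\det R_{n}$ is independent of $n$.

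It therefore suffices to evaluate $\det R_{n}$ at one convenient value. Using $H_{0}=3$, $H_{1}=0$, $H_{2}=2$, $H_{3}=5$, $H_{4}=7$, a direct computation gives
\[
\det R_{3}=\det\begin{pmatrix} 7 & 7 & 5 \\ 5 & 2 & 2 \\ 2 & 3 & 0 \end{pmatrix}=-42+28+55=41,
\]
so $\det R_{n}=41$ for every $n\geq 3$.

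Finally, I would expand $\det R_{n}$ for general $n$ along the first row. After simplifying the three $2\times 2$ minors, the cross terms $H_{n-1}H_{n-2}H_{n+1}$, $H_{n}^{2}H_{n-2}$ and $H_{n}H_{n-1}^{2}$ each appear twice with opposite signs and cancel, while $H_{n}H_{n-1}H_{n-2}$ appears twice with the same sign, producing the factor $-2$. What survives is precisely the left-hand side of (\ref{e7}), and equating it with $41$ completes the proof. The main obstacle is purely bookkeeping: the expansion produces roughly a dozen cubic monomials and one must check that the unwanted cross terms cancel in the correct pattern; conceptually the argument reduces to the two facts $\det H=1$ and $\det R_{3}=41$, in direct parallel with the Tribonacci identity (\ref{eq:3}).
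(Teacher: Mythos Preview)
Your argument is correct and is in fact cleaner than the paper's. The paper proves the identity by invoking its Theorem~\ref{t3}, which expresses $H^{n}$ as $\tfrac{1}{41}$ times a $3\times 3$ matrix whose entries are linear combinations of three or four consecutive $H_{k}$'s; it then computes the determinant of that more complicated matrix, asserts (without displaying the intermediate algebra) that it collapses to $\tfrac{1}{41}$ times the left-hand side of (\ref{e7}), and equates with $\det(H^{n})=1$. Your route bypasses Theorem~\ref{t3} entirely: the matrix $R_{n}$ is the direct $H$-analogue of the Tribonacci matrix (\ref{eq:2}), the relation $R_{n+1}=HR_{n}$ is immediate from the recurrence, and the determinant expansion is much shorter because each entry of $R_{n}$ involves at most two $H_{k}$'s rather than four. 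What the paper's approach buys is that the constant $41$ emerges structurally from the $\tfrac{1}{41}$ in Theorem~\ref{t3}; in your approach it appears only through the explicit evaluation $\det R_{3}=41$, but that is a trivial computation.

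One small inaccuracy in your write-up: in the cofactor expansion along the first row, the monomial $H_{n-1}H_{n-2}H_{n+1}$ never actually arises (the only cross terms that appear and cancel are $H_{n}^{2}H_{n-2}$ and $H_{n}H_{n-1}^{2}$, each once with each sign). This does not affect the validity of the argument, only the bookkeeping commentary.
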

\begin{proof}
For Eq. (\ref{mat}), $\det(H^{n})=1$ for all $n\in \mathbb{N}$ and now from Eq. (\ref{e6}), we get
$$H^{n}=\frac{1}{41}\left[
\begin{array}{ccc}
\begin{array}{c}10H_{n+1}+16H_{n}\\+7H_{n-1}\end{array}& \begin{array}{c}10H_{n}+26H_{n-1}\\
+23H_{n-2}+7H_{n-3}\end{array}& \begin{array}{c}10H_{n}+16H_{n-1}\\+7H_{n-2} \end{array}\\ 
\begin{array}{c}7H_{n+1}+3H_{n}\\+9H_{n-1}\end{array}& \begin{array}{c}7H_{n}+10H_{n-1}\\
+12H_{n-2}+9H_{n-3}\end{array}& \begin{array}{c}7H_{n}+3H_{n-1}\\+9H_{n-2} \end{array}\\ 
\begin{array}{c}9H_{n+1}-2H_{n}\\-6H_{n-1}\end{array}& \begin{array}{c}9H_{n}+7H_{n-1}\\
-8H_{n-2}-6H_{n-3}\end{array}& \begin{array}{c}9H_{n}-2H_{n-1}\\-6H_{n-2} \end{array}
\end{array}
\right].$$ Then,
\begin{align*}
\det(H^{n})&=\frac{1}{1681}\left|
\begin{array}{ccc}
\begin{array}{c}10H_{n+1}+16H_{n}\\+7H_{n-1}\end{array}& \begin{array}{c}10H_{n}+26H_{n-1}\\
+23H_{n-2}+7H_{n-3}\end{array}& \begin{array}{c}10H_{n}+16H_{n-1}\\+7H_{n-2} \end{array}\\ 
\begin{array}{c}7H_{n+1}+3H_{n}\\+9H_{n-1}\end{array}& \begin{array}{c}7H_{n}+10H_{n-1}\\
+12H_{n-2}+9H_{n-3}\end{array}& \begin{array}{c}7H_{n}+3H_{n-1}\\+9H_{n-2} \end{array}\\ 
\begin{array}{c}9H_{n+1}-2H_{n}\\-6H_{n-1}\end{array}& \begin{array}{c}9H_{n}+7H_{n-1}\\
-8H_{n-2}-6H_{n-3}\end{array}& \begin{array}{c}9H_{n}-2H_{n-1}\\-6H_{n-2} \end{array}
\end{array}\right| \\
&=\frac{1}{41}\left\lbrace \begin{array}{c} H_{n-1}^{3}+H_{n-2}^{2}H_{n+1}+H_{n-3}H_{n}^{2}\\-2H_{n-2}H_{n-1}H_{n}-H_{n-3}H_{n-1}H_{n+1}\end{array} \right\rbrace .
\end{align*}
Therefore,
$$\left\lbrace \begin{array}{c} H_{n-1}^{3}+H_{n-2}^{2}H_{n+1}+H_{n-3}H_{n}^{2}\\-2H_{n-2}H_{n-1}H_{n}-H_{n-3}H_{n-1}H_{n+1}\end{array} \right\rbrace =41 \det(H^{n}).$$
Since from Eq. (\ref{mat}), $\det(H^{n})=1$. Then,
$$\left\lbrace \begin{array}{c} H_{n-1}^{3}+H_{n-2}^{2}H_{n+1}+H_{n-3}H_{n}^{2}\\-2H_{n-2}H_{n-1}H_{n}-H_{n-3}H_{n-1}H_{n+1}\end{array} \right\rbrace =41.$$ Hence the result.
\end{proof}

\begin{theorem}\label{t5}
For $n\in \mathbb{N}$, we get
\begin{equation}\label{e10}
\left[
\begin{array}{c}
H_{n+2} \\ 
H_{n+1} \\ 
H_{n}
\end{array}
\right]=\left[
\begin{array}{ccc}
1& 1& 1 \\ 
1& 0 & 0 \\ 
0 & 1& 0
\end{array}
\right]\left[
\begin{array}{c}
H_{n+1} \\ 
H_{n} \\ 
H_{n-1}
\end{array}
\right].
\end{equation}
\end{theorem}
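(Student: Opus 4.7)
The plan is to verify the claimed identity by direct matrix multiplication on the right-hand side and then reading off each of the three components, matching them against the left-hand side entry by entry. No induction or auxiliary lemma is needed, since the statement is essentially a restatement of the defining recurrence in matrix form.

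Concretely, I would compute the product
\[
\left[\begin{array}{ccc} 1& 1& 1 \\ 1& 0 & 0 \\ 0 & 1& 0 \end{array}\right]\left[\begin{array}{c} H_{n+1} \\ H_{n} \\ H_{n-1} \end{array}\right]
=\left[\begin{array}{c} H_{n+1}+H_{n}+H_{n-1} \\ H_{n+1} \\ H_{n} \end{array}\right].
\]
The second and third entries are trivially $H_{n+1}$ and $H_n$, which match the second and third entries of the target column vector. For the first entry, I invoke the defining recurrence (\ref{def}), namely $H_{n+2}=H_{n+1}+H_{n}+H_{n-1}$ (valid for $n\geq 1$; for $n=0$ it can also be checked directly using $H_{-1}=-1$ as in Theorem \ref{t3}, since $H_1+H_0+H_{-1}=0+3-1=2=H_2$). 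This identifies the top entry with $H_{n+2}$ and completes the verification.

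The main obstacle, if any, is only notational: one must ensure that the convention $H_{-1}=-1$ used in Theorem \ref{t3} is consistent with extending the recurrence down to $n=0$, so that the statement holds for every $n\in\mathbb{N}$ including the boundary case. Apart from that check, the proof is a one-line unpacking of the definition, and no deeper structural argument is required.
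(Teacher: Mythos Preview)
Your verification is correct. The statement is nothing more than the recurrence $H_{n+2}=H_{n+1}+H_n+H_{n-1}$ rewritten as a matrix--vector product, and your one-line expansion of the right-hand side establishes it for every $n\ge 1$ (and for $n=0$ once $H_{-1}=-1$ is adopted, as you note).

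The paper, by contrast, sets the proof up as an induction on $n$: it checks the base case $n=1$, then in the ``inductive step'' multiplies both sides by the generating matrix and invokes the hypothesis. This works, but the induction is superfluous, since the identity at level $n$ does not actually depend on the identity at level $n-1$; each instance is a direct consequence of the defining recurrence. Your direct computation is therefore the more economical argument, while the paper's inductive framing would only become genuinely useful for a statement of the form $[H_{n+2},H_{n+1},H_n]^{T}=H^{\,n}[H_2,H_1,H_0]^{T}$ (which is in fact the content of the next theorem in the paper).
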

\begin{proof}
To prove the ongoing result we shall use induction on $n$. Indeed the result is true for $n=1$. Suppose that the result is true for $n$, then
\begin{align*}
\left[
\begin{array}{c}
H_{n+3} \\ 
H_{n+2} \\ 
H_{n+1}
\end{array}
\right]&=\left[
\begin{array}{c}
H_{n+2}+H_{n+1}+H_{n} \\ 
H_{n+2} \\ 
H_{n+1}
\end{array}
\right]\\
&=\left[
\begin{array}{ccc}
1& 1& 1 \\ 
1& 0 & 0 \\ 
0 & 1& 0
\end{array}
\right]\left[
\begin{array}{c}
H_{n+2} \\ 
H_{n+1} \\ 
H_{n}
\end{array}
\right].
\end{align*}
Since the result is true for $n$ then
\begin{align*}
\left[
\begin{array}{c}
H_{n+3} \\ 
H_{n+2} \\ 
H_{n+1}
\end{array}
\right]&=\left[
\begin{array}{ccc}
1& 1& 1 \\ 
1& 0 & 0 \\ 
0 & 1& 0
\end{array}
\right]^{2}\left[
\begin{array}{c}
H_{n+1} \\ 
H_{n} \\ 
H_{n-1}
\end{array}
\right]\\
&=\left[
\begin{array}{ccc}
1& 1& 1 \\ 
1& 0 & 0 \\ 
0 & 1& 0
\end{array}
\right]\left[
\begin{array}{c}
H_{n+1}+H_{n}+H_{n-1} \\ 
H_{n+1} \\ 
H_{n}
\end{array}
\right]\\
&=\left[
\begin{array}{ccc}
1& 1& 1 \\ 
1& 0 & 0 \\ 
0 & 1& 0
\end{array}
\right]\left[
\begin{array}{c}
H_{n+2} \\ 
H_{n+1} \\ 
H_{n}
\end{array}
\right].
\end{align*}
as desired.
\end{proof}

\begin{theorem}\label{t6}
For $n\geq 0$, we obtain
\begin{equation}\label{e11}
\left[
\begin{array}{c}
H_{n+2} \\ 
H_{n+1} \\ 
H_{n}
\end{array}
\right]=H^{n}\left[
\begin{array}{c}
H_{2} \\ 
H_{1} \\ 
H_{0}
\end{array}
\right].
\end{equation}
\end{theorem}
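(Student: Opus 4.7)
The plan is to prove Theorem \ref{t6} by a short induction on $n$ that builds directly on Theorem \ref{t5}. Conceptually, Theorem \ref{t5} already packages the recurrence $H_{n+2}=H_{n+1}+H_{n}+H_{n-1}$ as a single application of the matrix $H$ to the state vector $(H_{n+1},H_{n},H_{n-1})^{T}$, and Theorem \ref{t6} is then the natural iteration of that step, expressing the state at level $n$ as $n$ applications of $H$ to the initial vector $(H_{2},H_{1},H_{0})^{T}$.

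For the base case I would take $n=0$ and use the standard convention $H^{0}=I_{3}$, which makes the right-hand side equal to $(H_{2},H_{1},H_{0})^{T}$, matching the left-hand side at once. For the inductive step I would assume the identity for some $n\geq 0$ and invoke Theorem \ref{t5} with its parameter shifted up by one to obtain
$$\begin{bmatrix} H_{n+3}\\ H_{n+2}\\ H_{n+1}\end{bmatrix}=H\begin{bmatrix} H_{n+2}\\ H_{n+1}\\ H_{n}\end{bmatrix},$$
after which substituting the inductive hypothesis into the right-hand factor collapses the expression to $H\cdot H^{n}(H_{2},H_{1},H_{0})^{T}=H^{n+1}(H_{2},H_{1},H_{0})^{T}$, which is the statement at $n+1$.

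I expect essentially no obstacle here: the analytic content already resides in Theorem \ref{t5}, and this theorem only iterates that one-step advance via associativity of matrix multiplication. The two points worth a moment of care are adopting $H^{0}=I_{3}$ in the base case and keeping the index shift straight when quoting Theorem \ref{t5} at $n+1$ rather than at $n$; neither is substantive, and the whole argument fits in a few lines.
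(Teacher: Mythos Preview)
Your proposal is correct and follows essentially the same approach as the paper: induction on $n$ with base case $n=0$ via $H^{0}=I_{3}$, and an inductive step that applies $H$ once to the state vector (the content of Theorem~\ref{t5}) and then uses the hypothesis together with $H\cdot H^{n}=H^{n+1}$. The only cosmetic difference is that the paper writes out the one-step advance directly from the recurrence rather than citing Theorem~\ref{t5} by name.
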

\begin{proof}
To prove the ongoing result we shall use induction on $n$. Indeed the result is true for $n=0$. Suppose that the result is true for $n$ then $$\left[
\begin{array}{ccc}
1& 1& 1 \\ 
1& 0 & 0 \\ 
0 & 1& 0
\end{array}
\right]^{n+1}\left[
\begin{array}{c}
H_{2} \\ 
H_{1} \\ 
H_{0}
\end{array}
\right]=\left[
\begin{array}{ccc}
1& 1& 1 \\ 
1& 0 & 0 \\ 
0 & 1& 0
\end{array}
\right]H^{n}\left[
\begin{array}{c}
H_{2} \\ 
H_{1} \\ 
H_{0}
\end{array}
\right].$$ Since the result is true for $n$ then
\begin{align*}
\left[
\begin{array}{ccc}
1& 1& 1 \\ 
1& 0 & 0 \\ 
0 & 1& 0
\end{array}
\right]^{n+1}\left[
\begin{array}{c}
H_{2} \\ 
H_{1} \\ 
H_{0}
\end{array}
\right]&=\left[
\begin{array}{ccc}
1& 1& 1 \\ 
1& 0 & 0 \\ 
0 & 1& 0
\end{array}
\right]\left[
\begin{array}{c}
H_{n+2} \\ 
H_{n+1} \\ 
H_{n}
\end{array}
\right]\\
&=\left[
\begin{array}{c}
H_{n+2} +H_{n+1}+H_{n}\\ 
H_{n+2} \\ 
H_{n+1}
\end{array}
\right]\\
&=\left[
\begin{array}{c}
H_{n+3}\\ 
H_{n+2} \\ 
H_{n+1}
\end{array}
\right]
\end{align*}
as desired.
\end{proof}

\section{Binet's Formula by Matrix Diagonalization of Generating Matrix}

In this section we will use the diagonalization of the generating matrix (\ref{mat}) to obtain Binet's formula for the generalized Tribonacci sequence (\ref{def}). For this purpose we should prove the following theorem:

\begin{theorem}\label{t7}
For $n\geq 0$:
\begin{equation}\label{e15}
H_{n}=\frac{1}{\lambda_{T}}\left\lbrace \begin{array}{c} 3(\omega_{1}-\omega_{2})\alpha^{n+2}-3(\alpha-\omega_{2})\omega_{1}^{n+2}+3(\alpha-\omega_{1})\omega_{2}^{n+2}\\ -3(\omega_{1}-\omega_{2})\alpha^{n+1}+3(\alpha-\omega_{2})\omega_{1}^{n+1}-3(\alpha-\omega_{1})\omega_{2}^{n+1} \\ -(\omega_{1}-\omega_{2})\alpha^{n}+(\alpha-\omega_{2})\omega_{1}^{n}-(\alpha-\omega_{1})\omega_{2}^{n}\end{array} \right\rbrace,
\end{equation}
where $\lambda_{T}=(\alpha-\omega_{1})(\alpha-\omega_{2})(\omega_{1}-\omega_{2})$.
\end{theorem}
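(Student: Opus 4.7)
The plan is to carry out the matrix diagonalization announced in the section heading and read off the bottom entry of $H^n(2,0,3)^T$. First I would check that for each root $\lambda\in\{\alpha,\omega_1,\omega_2\}$ of the characteristic polynomial $x^3-x^2-x-1$, the vector $(\lambda^2,\lambda,1)^T$ is an eigenvector of $H$: the bottom two rows of $Hv=\lambda v$ force $v_2=\lambda v_3$ and $v_1=\lambda v_2$, after which the top row collapses onto the characteristic equation. Since the three roots are distinct, the Vandermonde-type matrix
\[
P=\begin{pmatrix} \alpha^2 & \omega_1^2 & \omega_2^2 \\ \alpha & \omega_1 & \omega_2 \\ 1 & 1 & 1\end{pmatrix}
\]
is invertible with $\det P=\lambda_T$ (standard Vandermonde computation up to a row reversal), and $H=PDP^{-1}$ for $D=\mathrm{diag}(\alpha,\omega_1,\omega_2)$.

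Next I would invoke Theorem \ref{t6}, which together with the initial values $(H_2,H_1,H_0)=(2,0,3)$ gives $(H_{n+2},H_{n+1},H_n)^T=PD^nP^{-1}(2,0,3)^T$. One application of Cramer's rule to the three $3\times 3$ matrices obtained by inserting $(2,0,3)^T$ into successive columns of $P$ evaluates
\[
P^{-1}\!\begin{pmatrix}2\\0\\3\end{pmatrix}=\frac{1}{\lambda_T}\begin{pmatrix}(\omega_1-\omega_2)(3\omega_1\omega_2+2)\\ -(\alpha-\omega_2)(3\alpha\omega_2+2)\\ (\alpha-\omega_1)(3\alpha\omega_1+2)\end{pmatrix},
\]
the bracketed quantities being precisely the numerators $A$, $B$, $C$ that appeared in Theorem \ref{t1}. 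Applying $D^n$ componentwise and noting that the bottom row of $P$ is $(1,1,1)$, the bottom component of the product collapses to
\[
H_n=\frac{1}{\lambda_T}\!\left[(\omega_1-\omega_2)(3\omega_1\omega_2+2)\alpha^n-(\alpha-\omega_2)(3\alpha\omega_2+2)\omega_1^n+(\alpha-\omega_1)(3\alpha\omega_1+2)\omega_2^n\right].
\]

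Finally, the Vieta relations (\ref{e0}) supply $\omega_1\omega_2=\alpha^2-\alpha-1$ (and the two cyclic analogues obtained by permuting the roots), so that $3\omega_1\omega_2+2=3\alpha^2-3\alpha-1$, and likewise for the other two factors. Expanding $(3\lambda^2-3\lambda-1)\lambda^n=3\lambda^{n+2}-3\lambda^{n+1}-\lambda^n$ for each $\lambda\in\{\alpha,\omega_1,\omega_2\}$ and regrouping the nine resulting monomials by power of the root reproduces the three-row sum in (\ref{e15}). The main obstacle I anticipate is purely clerical: tracking the signs from the three column expansions in the Cramer step and then recognising each numerator $3\omega_j\omega_k+2$ as a rewriting of $3\lambda^2-3\lambda-1$ via Vieta. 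Once that identification is made, the assembly into the stated form is mechanical.
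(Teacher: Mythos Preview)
Your proposal is correct and follows essentially the same route as the paper: diagonalise $H$ via the eigenvectors $(\lambda^2,\lambda,1)^T$, combine with Theorem~\ref{t6} applied to $(H_2,H_1,H_0)^T=(2,0,3)^T$, and then use the Vieta consequence $\omega_1\omega_2=\alpha^2-\alpha-1$ (and its cyclic analogues) to rewrite $3\omega_j\omega_k+2$ as $3\lambda^2-3\lambda-1$ before expanding. The only cosmetic difference is that the paper writes out the full inverse $P_T^{-1}$ and then reads off the third row of $H^n$, whereas you shortcut this with Cramer's rule to obtain $P^{-1}(2,0,3)^T$ directly; the arithmetic and the remaining steps are identical.
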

\begin{proof}
 The generating matrix is given by $H=\left[
\begin{array}{ccc}
1& 1& 1 \\ 
1& 0 & 0 \\ 
0 & 1& 0
\end{array}
\right]$. Now here our motive is to diagonalize the generating matrix $H$. Since $H$ is a square matrix and so let $x$ be the
eigenvalue of $H$ and then by the Cayley Hamilton theorem on matrices, we get 
\begin{align*}
0&=\left| H-xI_{3}\right|\\
&=\left| \left[
\begin{array}{ccc}
1-x& 1& 1 \\ 
1& -x & 0 \\ 
0 & 1& -x
\end{array}
\right]\right|\\
&=x^{3}-x^{2}-x-1.
\end{align*}
This is the characteristic equation of the generating matrix. Let $\alpha$, $\omega_{1}$ and $\omega_{2}$ be the roots of the characteristic equation and also $\alpha$, $\omega_{1}$ and $\omega_{2}$  be the three eigenvalues of the square matrix $H$. Now, we will try to find the eigenvectors corresponding to the eigenvalues $\alpha$, $\omega_{1}$ and $\omega_{2}$. To find the eigenvectors we simply solve the system of linear equations given by
$$(H-xI_{3})v_{x} =0,$$
where $v_{x}$ is the column vector of order $3\times 1$. First of all we calculate the eigenvector corresponding to the eigenvalue $\alpha$ and then
\begin{align*}
0=(H-\alpha I_{3})v_{\alpha}&=\left[
\begin{array}{ccc}
1-\alpha& 1& 1 \\ 
1& -\alpha & 0 \\ 
0 & 1& -\alpha
\end{array}
\right]\left[
\begin{array}{c}
v_{1} \\
v_{2} \\
v_{3}
\end{array}
\right]\\
&=\left[
\begin{array}{c}
(1-\alpha)v_{1}+v_{2}+v_{3} \\ 
v_{1}-\alpha v_{2} \\ 
v_{2}-\alpha v_{3}
\end{array}
\right],
\end{align*}
consider the system
\begin{equation}\label{e12}
\left\{ \begin{array}{lcc}
             (1-\alpha)v_{1}+v_{2}+v_{3}=0\\
             v_{1}-\alpha v_{2}=0 \\
             v_{2}-\alpha v_{3}=0
             \end{array}
   \right.
   \end{equation}
and if we take $v_{3}=c$ in equation (\ref{e12}), we get $v_{2}=\alpha c$ and $v_{1}=\alpha^{2} c$. Hence the eigenvectors corresponding to $\alpha$ are type $\left[
\begin{array}{c}
\alpha^{2} c\\ 
\alpha c\\ 
c
\end{array}
\right]$. In particular $c=1$, the eigenvector corresponding to $\alpha$ is $\left[
\begin{array}{c}
\alpha^{2} \\ 
\alpha \\ 
1
\end{array}
\right]$. Similarly the eigenvectors corresponding to $\omega_{1}$ and $\omega_{2}$ are $\left[
\begin{array}{c}
\omega_{1}^{2} \\ 
\omega_{1} \\ 
1
\end{array}
\right]$ and $\left[
\begin{array}{c}
\omega_{2}^{2} \\ 
\omega_{2} \\ 
1
\end{array}
\right]$, respectively.

Let $P_{T}$ be the matrix of eigenvectors, so $P_{T}=\left[
\begin{array}{ccc}
\alpha^{2}& \omega_{1}^{2}& \omega_{2}^{2} \\ 
\alpha& \omega_{1} & \omega_{2} \\ 
1 & 1& 1
\end{array}
\right]$ and then $$P_{T}^{-1}=\frac{1}{\lambda_{T}}\left[
\begin{array}{ccc}
\omega_{1}-\omega_{2}& -(\omega_{1}+\omega_{2})(\omega_{1}-\omega_{2})& \omega_{1}\omega_{2}(\omega_{1}-\omega_{2})\\ 
-(\alpha-\omega_{2})& (\alpha+\omega_{2})(\alpha-\omega_{2})& -\alpha\omega_{2}(\alpha-\omega_{2})\\ 
\alpha-\omega_{1}& -(\alpha+\omega_{1})(\alpha-\omega_{1})& \alpha\omega_{1}(\alpha-\omega_{1})
\end{array}
\right],$$ where $\lambda_{T}=(\alpha-\omega_{1})(\alpha-\omega_{2})(\omega_{1}-\omega_{2})$.

Now we keep the diagonal matrix $D$ in which eigenvalues of $H$ are on the main diagonal, so $D=\left[
\begin{array}{ccc}
\alpha& 0& 0\\ 
0& \omega_{1}& 0\\ 
0& 0& \omega_{2}
\end{array}
\right]$ and then by the diagonalization of matrices, we get $H=P_{T}DP_{T}^{-1}$. Then,
\begin{align*}
H^{n}&=\left( P_{T}DP_{T}^{-1}\right)^{n}\\
&=P_{T}D^{n}P_{T}^{-1}\\
&=\frac{1}{\lambda_{T}}\left[
\begin{array}{ccc}
\alpha^{2}& \omega_{1}^{2}& \omega_{2}^{2} \\ 
\alpha& \omega_{1} & \omega_{2} \\ 
1 & 1& 1
\end{array}
\right]\left[
\begin{array}{ccc}
\alpha^{n}& 0& 0\\ 
0& \omega_{1}^{n}& 0\\ 
0& 0& \omega_{2}^{n}
\end{array}
\right]\left[
\begin{array}{ccc}
\omega_{1}-\omega_{2}& -(\omega_{1}+\omega_{2})(\omega_{1}-\omega_{2})& \omega_{1}\omega_{2}(\omega_{1}-\omega_{2})\\ 
-(\alpha-\omega_{2})& (\alpha+\omega_{2})(\alpha-\omega_{2})& -\alpha\omega_{2}(\alpha-\omega_{2})\\ 
\alpha-\omega_{1}& -(\alpha+\omega_{1})(\alpha-\omega_{1})& \alpha\omega_{1}(\alpha-\omega_{1})
\end{array}
\right]\\
&=\frac{1}{\lambda_{T}}\left[
\begin{array}{ccc}
\alpha^{n+2}& \omega_{1}^{n+2}& \omega_{2}^{n+2} \\ 
\alpha^{n+1}& \omega_{1}^{n+1}& \omega_{2}^{n+1} \\ 
\alpha^{n}& \omega_{1}^{n}& \omega_{2}^{n} 
\end{array}
\right]\left[
\begin{array}{ccc}
\omega_{1}-\omega_{2}& -(\omega_{1}+\omega_{2})(\omega_{1}-\omega_{2})& \omega_{1}\omega_{2}(\omega_{1}-\omega_{2})\\ 
-(\alpha-\omega_{2})& (\alpha+\omega_{2})(\alpha-\omega_{2})& -\alpha\omega_{2}(\alpha-\omega_{2})\\ 
\alpha-\omega_{1}& -(\alpha+\omega_{1})(\alpha-\omega_{1})& \alpha\omega_{1}(\alpha-\omega_{1})
\end{array}
\right].
\end{align*}
Using the previous equality and Eq. (\ref{e11}), we solve the third row of matrix $H^{n}$:
\begin{align*}
H_{n}&=\frac{1}{\lambda_{T}}\left\lbrace 
2\left\lbrace \begin{array}{c} (\omega_{1}-\omega_{2})\alpha^{n}\\-(\alpha-\omega_{2})\omega_{1}^{n} \\ +(\alpha-\omega_{1})\omega_{2}^{n}\end{array}\right\rbrace +3 \left\lbrace \begin{array}{ccc} \omega_{1}\omega_{2}(\omega_{1}-\omega_{2})\alpha^{n}\\-\alpha \omega_{2}(\alpha-\omega_{2})\omega_{1}^{n} \\ +\alpha\omega_{1}(\alpha-\omega_{1})\omega_{2}^{n}\end{array}\right\rbrace
\right\rbrace \\
&=\frac{1}{\lambda_{T}}\left\lbrace 
\begin{array}{c} (2+3\omega_{1}\omega_{2})(\omega_{1}-\omega_{2})\alpha^{n}\\-(2+3\alpha\omega_{2})(\alpha-\omega_{2})\omega_{1}^{n} \\ +(2+3\alpha\omega_{1})(\alpha-\omega_{1})\omega_{2}^{n}\end{array} 
\right\rbrace \\
&=\frac{1}{\lambda_{T}}\left\lbrace 
\begin{array}{c} (2+3(\alpha^{2}-\alpha-1))(\omega_{1}-\omega_{2})\alpha^{n}\\-(2+3(\omega_{1}^{2}-\omega_{1}-1))(\alpha-\omega_{2})\omega_{1}^{n} \\ +(2+3(\omega_{2}^{2}-\omega_{2}-1))(\alpha-\omega_{1})\omega_{2}^{n}\end{array} 
\right\rbrace \\
&=\frac{1}{\lambda_{T}}\left\lbrace 
\begin{array}{c} (3\alpha^{2}-3\alpha-1)(\omega_{1}-\omega_{2})\alpha^{n}\\-(3\omega_{1}^{2}-3\omega_{1}-1)(\alpha-\omega_{2})\omega_{1}^{n} \\ +(3\omega_{2}^{2}-3\omega_{2}-1)(\alpha-\omega_{1})\omega_{2}^{n}\end{array} 
\right\rbrace \\
&=\frac{1}{\lambda_{T}}\left\lbrace \begin{array}{c} 3(\omega_{1}-\omega_{2})\alpha^{n+2}-3(\alpha-\omega_{2})\omega_{1}^{n+2}+3(\alpha-\omega_{1})\omega_{2}^{n+2}\\ -3(\omega_{1}-\omega_{2})\alpha^{n+1}+3(\alpha-\omega_{2})\omega_{1}^{n+1}-3(\alpha-\omega_{1})\omega_{2}^{n+1} \\ -(\omega_{1}-\omega_{2})\alpha^{n}+(\alpha-\omega_{2})\omega_{1}^{n}-(\alpha-\omega_{1})\omega_{2}^{n}\end{array} \right\rbrace.
\end{align*}
The proof is completed.
\end{proof}
\begin{theorem}\label{t8}
The generalized characteristic roots of $H^{n}$ are
\begin{equation}\label{e16}
\alpha^{n}=\frac{K_{n}}{3}+A_{n}+B_{n},\  \omega_{1}^{n}=\frac{K_{n}}{3}+\epsilon A_{n}+\epsilon^{2} B_{n}\ \textrm{and}\  \omega_{2}^{n}=\frac{K_{n}}{3}+\epsilon^{2}A_{n}+\epsilon B_{n},
\end{equation}
where $$A_{n}=\sqrt[3]{\frac{K_{n}^{3}}{27}-\frac{K_{n}R_{n}}{6}+\frac{1}{2}+\sqrt{\Delta(n)}},\ B_{n}=\sqrt[3]{\frac{K_{n}^{3}}{27}-\frac{K_{n}R_{n}}{6}+\frac{1}{2}-\sqrt{\Delta(n)}},$$ with $\Delta(n)=\frac{K_{n}^{3}}{27}-\frac{K_{n}^{2}R_{n}^{2}}{108}-\frac{K_{n}R_{n}}{6}+\frac{R_{n}^{3}}{27}+\frac{1}{4}$ and $\epsilon=-\frac{1}{2}+\frac{i\sqrt{3}}{2}$.
\end{theorem}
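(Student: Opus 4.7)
The plan is to derive (\ref{e16}) by applying Cardano's method to the minimal polynomial of the three eigenvalues of $H^{n}$. Since $\alpha$, $\omega_{1}$, $\omega_{2}$ are the eigenvalues of $H$ (Theorem \ref{t7}), their $n$-th powers are exactly the eigenvalues of $H^{n}$. Writing $R_{n}=\alpha^{n}\omega_{1}^{n}+\alpha^{n}\omega_{2}^{n}+\omega_{1}^{n}\omega_{2}^{n}$, and using $\alpha^{n}+\omega_{1}^{n}+\omega_{2}^{n}=K_{n}$ (the Newton power-sum definition of the Tribonacci--Lucas numbers) together with $\alpha^{n}\omega_{1}^{n}\omega_{2}^{n}=(\alpha\omega_{1}\omega_{2})^{n}=1$ from (\ref{e0}), the numbers $\alpha^{n}$, $\omega_{1}^{n}$, $\omega_{2}^{n}$ are the roots of
\begin{equation*}
y^{3}-K_{n}y^{2}+R_{n}y-1=0.
\end{equation*}

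I would then depress this cubic by the standard substitution $y=z+K_{n}/3$, producing $z^{3}+pz+q=0$ with $p=R_{n}-K_{n}^{2}/3$ and $q=-\tfrac{2}{27}K_{n}^{3}+\tfrac{1}{3}K_{n}R_{n}-1$. Direct algebraic manipulation gives
\begin{equation*}
-\tfrac{q}{2}=\tfrac{K_{n}^{3}}{27}-\tfrac{K_{n}R_{n}}{6}+\tfrac{1}{2},\qquad \tfrac{q^{2}}{4}+\tfrac{p^{3}}{27}=\Delta(n),
\end{equation*}
matching precisely the radicands appearing in $A_{n}$, $B_{n}$ and in the theorem's $\Delta(n)$. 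Invoking Cardano's formula, the three roots of the depressed cubic are $A_{n}+B_{n}$, $\epsilon A_{n}+\epsilon^{2}B_{n}$, and $\epsilon^{2}A_{n}+\epsilon B_{n}$, with the cube roots chosen so that $A_{n}B_{n}=-p/3$. Adding back $K_{n}/3$ and identifying which root is $\alpha^{n}$, $\omega_{1}^{n}$, $\omega_{2}^{n}$ is fixed by the $n=1$ case, where the formulas collapse to the Cardano expressions for $\alpha$, $\omega_{1}$, $\omega_{2}$ already displayed in the Preliminaries; the same branch convention then propagates to all $n$.

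The main obstacle is the algebraic verification that $q^{2}/4+p^{3}/27$ simplifies to the stated $\Delta(n)$: this requires expanding $(R_{n}-K_{n}^{2}/3)^{3}$ and the square of the three-term expression for $q$, and then observing that the degree-six terms $K_{n}^{6}/729$ and the mixed terms $R_{n}K_{n}^{4}/81$ cancel, leaving the four surviving monomials in $\Delta(n)$. A secondary technical point is the branch selection: one must pin down the real/principal cube roots for $A_{n}$ and $B_{n}$ in such a way that $A_{n}B_{n}=(K_{n}^{2}-3R_{n})/9$, so that the triple $(A_{n}+B_{n},\,\epsilon A_{n}+\epsilon^{2}B_{n},\,\epsilon^{2}A_{n}+\epsilon B_{n})$ genuinely enumerates the three distinct roots of the depressed cubic in the correct order matching $\alpha^{n}$, $\omega_{1}^{n}$, $\omega_{2}^{n}$.
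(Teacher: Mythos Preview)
Your proposal is correct and reaches the same cubic $y^{3}-K_{n}y^{2}+R_{n}y-1=0$ followed by Cardano, but the route to that cubic differs genuinely from the paper's. The paper expands the determinant $|H^{n}-yI_{3}|$ directly, using the explicit matrix for $H^{n}$ from Theorem~\ref{t3} (whose entries are linear combinations of $H_{n+1},\dots,H_{n-3}$); the resulting coefficients are then identified via Theorem~\ref{t2} (the $y^{2}$-coefficient becomes $K_{n}$), Theorem~\ref{t4} (the constant term becomes $1$), and a new identity produced along the way that \emph{defines} $R_{n}$ as an explicit quadratic form in $H_{n+1},\dots,H_{n-3}$. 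You instead invoke spectral mapping---the eigenvalues of $H^{n}$ are the $n$-th powers of those of $H$---and read off the coefficients from Vieta's relations, taking $R_{n}=\alpha^{n}\omega_{1}^{n}+\alpha^{n}\omega_{2}^{n}+\omega_{1}^{n}\omega_{2}^{n}$. Your argument is shorter and more conceptual; what the paper's longer determinant computation buys is the closed-form expression for $R_{n}$ in terms of the $H_{j}$ sequence, which is not visible from your symmetric-function definition and which (since $R_{n}$ is left undefined in the theorem statement itself) is effectively part of what the paper is establishing.
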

\begin{proof}
If we write. the characteristic polynomial of $H^{n}$, we achieve
\begin{align*}
\left| H^{n}-yI_{3}\right| &=\left|
\begin{array}{ccc}
\frac{\begin{array}{c}10H_{n+1}+16H_{n}\\+7H_{n-1}\end{array}}{41}-y& \frac{\begin{array}{c}10H_{n}+26H_{n-1}\\
+23H_{n-2}+7H_{n-3}\end{array}}{41}& \frac{\begin{array}{c}10H_{n}+16H_{n-1}\\+7H_{n-2} \end{array}}{41}\\ 
\frac{\begin{array}{c}7H_{n+1}+3H_{n}\\+9H_{n-1}\end{array}}{41}& \frac{\begin{array}{c}7H_{n}+10H_{n-1}\\
+12H_{n-2}+9H_{n-3}\end{array}}{41}-y& \frac{\begin{array}{c}7H_{n}+3H_{n-1}\\+9H_{n-2} \end{array}}{41}\\ 
\frac{\begin{array}{c}9H_{n+1}-2H_{n}\\-6H_{n-1}\end{array}}{41}& \frac{\begin{array}{c}9H_{n}+7H_{n-1}\\
-8H_{n-2}-6H_{n-3}\end{array}}{41}& \frac{\begin{array}{c}9H_{n}-2H_{n-1}\\-6H_{n-2} \end{array}}{41}-y
\end{array}\right| \\
&=\frac{1}{1681}\left|
\begin{array}{ccc}
\begin{array}{c}10H_{n+1}+16H_{n}\\+7H_{n-1}-41y\end{array}& \begin{array}{c}10H_{n}+26H_{n-1}\\
+23H_{n-2}+7H_{n-3}\end{array}& \begin{array}{c}10H_{n}+16H_{n-1}\\+7H_{n-2} \end{array}\\ 
\begin{array}{c}7H_{n+1}+3H_{n}\\+9H_{n-1}\end{array}& \begin{array}{c}7H_{n}+10H_{n-1}\\
+12H_{n-2}+9H_{n-3}-41y\end{array}& \begin{array}{c}7H_{n}+3H_{n-1}\\+9H_{n-2} \end{array}\\ 
\begin{array}{c}9H_{n+1}-2H_{n}\\-6H_{n-1}\end{array}& \begin{array}{c}9H_{n}+7H_{n-1}\\
-8H_{n-2}-6H_{n-3}\end{array}& \begin{array}{c}9H_{n}-2H_{n-1}\\-6H_{n-2}-41y \end{array}
\end{array}\right|.
\end{align*}
Then, after using Eqs. (\ref{e5}), (\ref{e6}) and (\ref{e7}) we conclude that
\begin{align*}
\left| H^{n}-yI_{3}\right| &=\left\lbrace \begin{array}{c} H_{n-1}^{3}+H_{n-2}^{2}H_{n+1}+H_{n-3}H_{n}^{2}\\-2H_{n-2}H_{n-1}H_{n}-H_{n-3}H_{n-1}H_{n+1} \\ -y\left\lbrace \begin{array}{c} 4H_{n}H_{n-1}+2H_{n}H_{n-2}+6H_{n}H_{n-3}+6H_{n}^{2}\\-6H_{n+1}H_{n-1}-6H_{n-1}H_{n-2}-4H_{n+1}H_{n-2}\\ +H_{n+1}H_{n-3}-3H_{n-1}^{2}\end{array} \right\rbrace\\ +y^{2}(10H_{n+1}+32H_{n}+15H_{n-1}+6H_{n-2}+9H_{n-3})-41y^{3}
\end{array}\right\rbrace \\
&=\left\lbrace \begin{array}{c} 41-y\left\lbrace \begin{array}{c} 4H_{n}H_{n-1}+2H_{n}H_{n-2}+6H_{n}H_{n-3}+6H_{n}^{2}\\-6H_{n+1}H_{n-1}-6H_{n-1}H_{n-2}-4H_{n+1}H_{n-2}\\ +H_{n+1}H_{n-3}-3H_{n-1}^{2}\end{array} \right\rbrace+41y^{2}K_{n}-41y^{3}
\end{array}\right\rbrace \\
&=41-41yR_{n}+41y^{2}K_{n}-41y^{3}, 
\end{align*}
where $K_{n}=\frac{1}{41}(9H_{n+2}-2H_{n+1}+35H_{n})$ in Eq. (\ref{e5}) and 
\begin{equation}\label{e8}
R_{n}=\frac{1}{41}\left\lbrace \begin{array}{c} 4H_{n}H_{n-1}+2H_{n}H_{n-2}+6H_{n}H_{n-3}+6H_{n}^{2}\\-6H_{n+1}H_{n-1}-6H_{n-1}H_{n-2}-4H_{n+1}H_{n-2}\\ +H_{n+1}H_{n-3}-3H_{n-1}^{2}\end{array} \right\rbrace.
\end{equation}
Hence the characteristic equation of $H^{n}$ is given by
\begin{equation}\label{e9}
y^{3}-K_{n}y^{2}+R_{n}y-1=0
\end{equation}
and the generalized characteristic roots are $y_{1}=\frac{K_{n}}{3}+A_{n}+B_{n}$, $y_{2}=\frac{K_{n}}{3}+\epsilon A_{n}+\epsilon^{2} B_{n}$ and $y_{3}=\frac{K_{n}}{3}+\epsilon^{2}A_{n}+\epsilon B_{n}$, where $$A_{n}=\sqrt[3]{\frac{K_{n}^{3}}{27}-\frac{K_{n}R_{n}}{6}+\frac{1}{2}+\sqrt{\Delta(n)}},\ B_{n}=\sqrt[3]{\frac{K_{n}^{3}}{27}-\frac{K_{n}R_{n}}{6}+\frac{1}{2}-\sqrt{\Delta(n)}},$$ with $\Delta(n)=\frac{K_{n}^{3}}{27}-\frac{K_{n}^{2}R_{n}^{2}}{108}-\frac{K_{n}R_{n}}{6}+\frac{R_{n}^{3}}{27}+\frac{1}{4}$ and $\epsilon=-\frac{1}{2}+\frac{i\sqrt{3}}{2}$. 
Clearly the Eq. (\ref{e9}) has three roots given $\alpha^{n}$, $\omega_{1}^{n}$ and $\omega_{2}^{n}$, and consequently we get the desired result as $\alpha^{n}=\frac{K_{n}}{3}+A_{n}+B_{n}$, $\omega_{1}^{n}=\frac{K_{n}}{3}+\epsilon A_{n}+\epsilon^{2} B_{n}$ and $\omega_{2}^{n}=\frac{K_{n}}{3}+\epsilon^{2}A_{n}+\epsilon B_{n}$. Hence the result.
\end{proof}

\begin{theorem}\label{t9}
The characteristic equation of $H$ is
\begin{equation}\label{m1}
\alpha^{3}-\alpha^{2}-\alpha-1=0.
\end{equation}
\end{theorem}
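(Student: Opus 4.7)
The plan is to verify this by direct computation of the characteristic polynomial $\det(H - \alpha I_3)$. This is in fact almost a restatement of a calculation that already appeared inside the proof of Theorem \ref{t7} while diagonalizing $H$, so the main task is simply to isolate and formalize that step as a standalone result.

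First I would write down $H - \alpha I_3 = \bigl(\begin{smallmatrix} 1-\alpha & 1 & 1 \\ 1 & -\alpha & 0 \\ 0 & 1 & -\alpha \end{smallmatrix}\bigr)$ and expand the determinant, most conveniently by cofactor expansion along the third row (which has a zero entry) or along the first column. Expanding along the first column gives $(1-\alpha)(\alpha^{2}-0) - 1\cdot(-\alpha - 1) + 0 = (1-\alpha)\alpha^{2} + \alpha + 1 = -\alpha^{3} + \alpha^{2} + \alpha + 1 = -(\alpha^{3} - \alpha^{2} - \alpha - 1)$. Setting this equal to zero, since $\alpha$ is an eigenvalue iff $\det(H - \alpha I_3) = 0$, yields the stated characteristic equation.

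There is no genuine obstacle here: the proof is a three-line determinant expansion. The only thing worth flagging is consistency with the paper's conventions, namely that $\alpha$ denotes simultaneously the (generic) indeterminate in the characteristic polynomial and one of its specific roots (together with $\omega_{1}$ and $\omega_{2}$) as introduced just after Definition~1. I would therefore close the argument with a short remark observing that this is precisely the characteristic equation already recorded in Eq.~(\ref{eq:1}) for the Tribonacci sequence $\{T_{n}\}_{n\geq 0}$, thereby confirming that $\{H_{n}\}_{n\geq 0}$ shares the same characteristic roots $\alpha$, $\omega_{1}$, $\omega_{2}$ used throughout Section~2 and in the Binet-type formulas of Theorems \ref{t1} and \ref{t7}.
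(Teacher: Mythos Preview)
Your proof is correct and far more direct than the paper's. Rather than simply expanding $\det(H-\alpha I_{3})$, the paper takes an elaborate indirect route: it invokes the closed form for $H^{n}$ from Theorem~\ref{t3}, divides every entry by $H_{n-1}$, passes to the limit $n\to\infty$ using $\lim_{n\to\infty} H_{n}/H_{n-1}=\alpha$, and then evaluates the determinant of the resulting $3\times 3$ matrix. Since $\det(H^{n})=1$ one has $\det(H^{n}/H_{n-1})=H_{n-1}^{-3}\to 0$, while the explicit entrywise computation factors that determinant as $1681(\alpha^{3}-\alpha^{2}-\alpha-1)^{2}$, forcing the stated equation. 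Your three-line cofactor expansion reaches the same conclusion with no appeal to Theorem~\ref{t3}, no limits, and no auxiliary identities; as you observe, it is exactly the computation already carried out inside the proof of Theorem~\ref{t7}. The only thing the paper's detour arguably buys is a consistency check between the $H^{n}$ formula and the spectral data of $H$, but as a proof of the characteristic equation itself your approach is cleaner and self-contained.
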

\begin{proof}
Here we employ the method of matrices as well as determinants to obtain the characteristic equation for $H$. Since
\begin{align*}
H^{n}&=\frac{1}{41}\left[
\begin{array}{ccc}
\begin{array}{c}10H_{n+1}+16H_{n}\\+7H_{n-1}\end{array}& \begin{array}{c}10H_{n}+26H_{n-1}\\
+23H_{n-2}+7H_{n-3}\end{array}& \begin{array}{c}10H_{n}+16H_{n-1}\\+7H_{n-2} \end{array}\\ 
\begin{array}{c}7H_{n+1}+3H_{n}\\+9H_{n-1}\end{array}& \begin{array}{c}7H_{n}+10H_{n-1}\\
+12H_{n-2}+9H_{n-3}\end{array}& \begin{array}{c}7H_{n}+3H_{n-1}\\+9H_{n-2} \end{array}\\ 
\begin{array}{c}9H_{n+1}-2H_{n}\\-6H_{n-1}\end{array}& \begin{array}{c}9H_{n}+7H_{n-1}\\
-8H_{n-2}-6H_{n-3}\end{array}& \begin{array}{c}9H_{n}-2H_{n-1}\\-6H_{n-2} \end{array}
\end{array}\right]\\
\frac{H^{n}}{H_{n-1}}&=\frac{1}{41}\left[
\begin{array}{ccc}
\frac{\begin{array}{c}10H_{n+1}+16H_{n}\\+7H_{n-1}\end{array}}{H_{n-1}}& \frac{\begin{array}{c}10H_{n}+26H_{n-1}\\
+23H_{n-2}+7H_{n-3}\end{array}}{H_{n-1}}& \frac{\begin{array}{c}10H_{n}+16H_{n-1}\\+7H_{n-2} \end{array}}{H_{n-1}}\\ 
\frac{\begin{array}{c}7H_{n+1}+3H_{n}\\+9H_{n-1}\end{array}}{H_{n-1}}& \frac{\begin{array}{c}7H_{n}+10H_{n-1}\\
+12H_{n-2}+9H_{n-3}\end{array}}{H_{n-1}}& \frac{\begin{array}{c}7H_{n}+3H_{n-1}\\+9H_{n-2} \end{array}}{H_{n-1}}\\ 
\frac{\begin{array}{c}9H_{n+1}-2H_{n}\\-6H_{n-1}\end{array}}{H_{n-1}}& \frac{\begin{array}{c}9H_{n}+7H_{n-1}\\
-8H_{n-2}-6H_{n-3}\end{array}}{H_{n-1}}& \frac{\begin{array}{c}9H_{n}-2H_{n-1}\\-6H_{n-2} \end{array}}{H_{n-1}}
\end{array}\right].
\end{align*}
Since the ratio of two consecutive generalized Tribonacci numbers is equal to $\alpha$, then
\begin{align*}
\lim_{n \to \infty}\frac{10H_{n+1}+16H_{n}+7H_{n-1}}{H_{n-1}}&=10 \lim_{n \to \infty}\frac{H_{n+1}}{H_{n-1}}+16 \lim_{n \to \infty}\frac{H_{n}}{H_{n-1}}+7\\
&=10\lim_{n \to \infty}\frac{H_{n+1}}{H_{n}}\cdot\lim_{n \to \infty}\frac{H_{n}}{H_{n-1}}+16 \lim_{n \to \infty}\frac{H_{n}}{H_{n-1}}+7\\
&=10\alpha^{2}+16\alpha +7
\end{align*}
and
\begin{align*}
&\lim_{n \to \infty}\frac{10H_{n}+26H_{n-1}+23H_{n-2}+7H_{n-3}}{H_{n-1}}\\
&=10 \lim_{n \to \infty}\frac{H_{n}}{H_{n-1}}+26 +\frac{23}{\lim_{n \to \infty}\frac{H_{n-1}}{H_{n-2}}}+ \frac{7}{\lim_{n \to \infty}\frac{H_{n-1}}{H_{n-2}}\cdot \lim_{n \to \infty}\frac{H_{n-2}}{H_{n-3}}}\\
&=10 \alpha+26 +\frac{23}{\alpha}+ \frac{7}{\alpha^{2}}.
\end{align*}
Again
\begin{align*}
\lim_{n \to \infty}\frac{10H_{n}+16H_{n-1}+7H_{n-2}}{H_{n-1}}&=10 \lim_{n \to \infty}\frac{H_{n}}{H_{n-1}}+16 +\frac{7}{\lim_{n \to \infty}\frac{H_{n-1}}{H_{n-2}}}\\
&=10\alpha +16+ \frac{7}{\alpha}.
\end{align*}
Therefore,
$$\lim_{n \to \infty}\frac{H^{n}}{H_{n-1}}=\frac{1}{41}\left[
\begin{array}{ccc}
10\alpha^{2}+16\alpha +7& 10 \alpha+26 +\frac{23}{\alpha}+ \frac{7}{\alpha^{2}}& 10\alpha +16+ \frac{7}{\alpha}\\ 
7\alpha^{2}+3\alpha +9& 17 \alpha+10 +\frac{12}{\alpha}+ \frac{9}{\alpha^{2}}& 7\alpha +3+ \frac{9}{\alpha}\\ 
9\alpha^{2}-2\alpha -6& 9 \alpha+7 -\frac{8}{\alpha}- \frac{6}{\alpha^{2}}&9\alpha -2- \frac{6}{\alpha} \end{array}\right].$$
If we consider Eqs. (\ref{e0}) and (\ref{eq:6}), we have
\begin{align*}
&\left[
\begin{array}{ccc}
10\alpha^{2}+16\alpha +7& 10 \alpha+26 +\frac{23}{\alpha}+ \frac{7}{\alpha^{2}}& 10\alpha +16+ \frac{7}{\alpha}\\ 
7\alpha^{2}+3\alpha +9& 17 \alpha+10 +\frac{12}{\alpha}+ \frac{9}{\alpha^{2}}& 7\alpha +3+ \frac{9}{\alpha}\\ 
9\alpha^{2}-2\alpha -6& 9 \alpha+7 -\frac{8}{\alpha}- \frac{6}{\alpha^{2}}&9\alpha -2- \frac{6}{\alpha} \end{array}\right]\\
&=\left[
\begin{array}{ccc}
10\alpha^{2}+16\alpha +7& 16\alpha^{2}+\alpha +3&7\alpha^{2}+3\alpha +9\\ 
7\alpha^{2}+3\alpha +9& 3\alpha^{2}+13\alpha -2& 9\alpha^{2}-2\alpha -6\\ 
9\alpha^{2}-2\alpha -6& -2\alpha^{2}+5\alpha +15& -6\alpha^{2}+15\alpha +4 \end{array}\right].
\end{align*}
If we will compute the determinants of both sides, we get the characteristic equation of the matrix $H$ as follows
\begin{align*}
0&=\left| 
\begin{array}{ccc}
10\alpha^{2}+16\alpha +7& 16\alpha^{2}+\alpha +3&7\alpha^{2}+3\alpha +9\\ 
7\alpha^{2}+3\alpha +9& 3\alpha^{2}+13\alpha -2& 9\alpha^{2}-2\alpha -6\\ 
9\alpha^{2}-2\alpha -6& -2\alpha^{2}+5\alpha +15& -6\alpha^{2}+15\alpha +4 \end{array}\right| \\
&=1681(-\alpha-\alpha^{2}+\alpha^{3}-1)^{2}.
\end{align*}
Then, $\alpha^{3}-\alpha^{2}-\alpha-1=0$ as required.
\end{proof}

\end{document}